\theoremstyle{plain} \newtheorem{lemma}{Lemma}
\theoremstyle{plain} 
\theoremstyle{remark} \newtheorem{remark}{Remark}
\begin{document}
\title{On two simple tests for normality with high power}

\author{ \bf{M\aa ns Thulin}$^{1}$}
\date{First version: August 31, 2010. Updated: \today}

\maketitle
\footnotetext[1]{Department of Mathematics, Uppsala University, P.O.Box 480, 751 06 Uppsala, Sweden.\\Phone: +46(0)184713389; Fax: +46(0)184713201; E-mail: thulin@math.uu.se}

% Start doublespacing
%\doublespacing

\begin{abstract}
\noindent The test statistics of two powerful tests for normality \citep{lm1,mud2} are estimators of the correlation coefficient between certain sample moments. We derive new versions of the test statistics that are functions of the sample skewness and sample kurtosis. This sheds some light on the nature of these tests and leads to easier computations.
%\noindent Independence of certain sample moments can be used to characterize the normal distribution. A test for normality with high power, proposed by \citet{lm1}, uses a jackknife estimator of the correlation coefficient between the sample mean and the sample variance as test statistic. A similar test proposed by \cite{mud2} uses the correlation coefficient between the sample mean and the third central sample moment. We derive explicit expressions for these correlations in terms of cumulants, yielding estimators that are smooth functions of the sample skewness and sample kurtosis. This sheds some light on the nature of these tests and leads to test statistics that are easier to compute. In an extensive simulation power study the tests exhibit higher power than some common tests for normality against a wide range of distributions. 
\end{abstract}

%\begin{keyword}
%Goodness-of-fit; Kurtosis; Skewness; Test for normality.
%\end{keyword}

%%%%%%%%%%%%%%%%%%%%%%%%%%%%%%%%%%%%%%%%%%%%%%%%%%%%%%%%%%%%%%%%%%%%
\section{Introduction}
The assumption of normality is the basis of many of the most common statistical methods. Tests for normality, used to assess the normality assumption, is therefore a widely studied field. \citet{tho1} provides an overview. Some popular tests for normality are based on the sample skewness and sample kurtosis, described in Section \ref{motivation} below. Others use characterizations of the normal distribution.

One such well-known characterization is that the sample mean $\bar{X}$ and sample variance $S^2$ are independent if and only if the underlying population is normal. Similarly, $\bar{X}$ and $n^{-1}\sum_{i=1}^n(X_i-\bar{X})^3$ are independent if and only if $X$ is normal; see \citet{klr1}, Sections 4.2 and 4.7.

\citet{lm1} proposed a test based on the independence of $\bar{X}$ and $S^2$. They noted that it is difficult to test the independence of $\bar{X}$ and $S^2$ but that the correlation coefficient between the two is possible to estimate. They used a jackknife procedure to estimate $\rho(\bar{X}, S^2)$, and used this for a test for normality against asymmetric alternatives. The test has been modified, generalized and discussed in \citet{br1}, \citet{mud1} and \citet{wm1}. In \citet{mud2} a test based on the independence of $\bar{X}$ and $n^{-1}\sum_{i=1}^n(X_i-\bar{X})^3$ was proposed, constructed using the same jackknife procedure. The authors named the tests the $Z_2$ test and $Z_3$ test.

In this paper we show that it is possible to replace the jackknife estimators used by Lin and Mudholkar and Mudholkar et~al. by estimators that are smooth functions of the sample skewness and sample kurtosis. In Section 2 we describe the $Z_2$ and $Z_3$ tests and derive the new estimators. In Section 3 we present some simulation results that indicate that the tests have very good power properties. Throughout the text we use the notation $\mu_k=E(X-\mu)^k$ to denote central moments.

%%%%%%%%%%%%%%%%%%%%%%%%%%%%%%%%%%%%%%%%%%%%%%%%%%%%%%%%%%%%%%%%%%%%
\section{The $Z$ tests and correlation coefficients}

\subsection{The $Z_2$ and $Z_3$ tests}\label{oldtests}
Lin and Mudholkar used the $n$ jackknife replications $(\bar{X}_{-i},S^2_{-i})$, where
\[
\bar{X}_{-i}=\frac{1}{n-1}\sum_{j\neq i}X_j,\qquad S^2_{-i}=\frac{1}{n-2}\sum_{j\neq i}(X_j-\bar{X}_{-i})^2,
\]
to study the dependence between $\bar{X}$ and $S^2$. They applied the cube-root transformation $Y_i=(S^2_{-i})^{1/3}$ and concluded that the sample correlation coefficient $r(\bar{X}_{-i},Y_i)$ equals the sample correlation coefficient
\[
r_2=r(X_i,Y_i)=\frac{\sum_{i=1}^n (X_i-\bar{X})(Y_i-\bar{Y})}{\sqrt{\sum_{i=1}^n (X_i-\bar{X})^2 \sum_{i=1}^n(Y_i-\bar{Y})^2}}.
\]
Finally, they used Fisher's $z$-transform to obtain the test statistic
\[
Z_2=\frac{1}{2}\log \Big{(}\frac{1+r_2}{1-r_2} \Big{)}
\]
and used this for their test for normality. The test is sensitive to departures from normality in the form of skewness. If the sign of the skewness of the alternative is known, a one-tailed test can be used. If it is unknown, a two-tailed test is used. The latter will be refered to as the $|Z_2|$ test.

\citet{mud2} used the same jackknife approach to construct another test for normality. This time they considered the mean $\bar{X}$ and the third central sample moment $\hat{\mu}_3=n^{-1}\sum_{i=1}^n(X_i-\bar{X})^3$. Letting
\[
\bar{X}_{-i}=\frac{1}{n-1}\sum_{j\neq i}X_j,\qquad \hat{\mu}_{3,-i}=\frac{1}{n-1}\sum_{j\neq i}(X_j-\bar{X}_{-i})^3=Y_i,
\]
they used the sample correlation coefficient
\[
r_3=r(X_i,Y_i)=\frac{\sum_{i=1}^n (X_i-\bar{X})Y_i}{\sqrt{\sum_{i=1}^n (X_i-\bar{X})^2 \sum_{i=1}^n(Y_i-\bar{Y})^2}}
\]
in the same manner as in the above test, obtaining the test statistic
\[
Z_3=\frac{1}{2}\log \Big{(}\frac{1+r_3}{1-r_3} \Big{)}.
\]
The simulation results in \citet{mud2} indicate that both tests have high power against some interesting alternatives.

%%%%%%%%%%%%%%%%%%%%%%%%%%%%%%%%%%%%%%%%%%%%%%%%%%%%%%%%%%%%%%%%%%%%

\subsection{Explicit expressions}\label{motivation}
Next, we derive explicit expressions for the correlation coefficients $\rho(\bar{X}, S^2)$ and $\rho(\bar{X},\hat{\mu}_3)$, which enables us to estimate the correlation coefficients using sample moments. The estimators considered in the correlations will be the unbiased estimators $S^2$ and $\hat{\mu}_{3}=\frac{n}{(n-1)(n-2)}\sum_{i=1}^n(X_i-\bar{X})^3$.

The formulae obtained are somewhat easier to express using standardized cumulants. Let $\varkappa_1, \varkappa_2, \ldots$ denote the cumulants of $X$. The $k$th standardized cumulant of $X$ is $\frac{\varkappa_k}{\varkappa_2^{k/2}}$. We are particularly interested in
\[\begin{split}
\gamma=\frac{\varkappa_3}{\varkappa_2^{3/2}}=\frac{\mu_3}{\sigma^ 3},\qquad\kappa=\frac{\varkappa_4}{\varkappa_2^{2}}=\frac{\mu_4}{\sigma^4}-3,\qquad\lambda=\frac{\varkappa_6}{\varkappa_2^{3}}=\frac{\mu_6}{\sigma^6}-15\kappa-10\gamma^2-15.
\end{split}\]

$\gamma$ is the skewness of $X$ and $\kappa$ is the (excess) kurtosis of $X$. All cumulants are 0 for the normal distribution. %See Section 4.6 of \citet{gut1} for more on cumulants.

\begin{lemma}\label{korrlemma}
Suppose that $X_1, \ldots, X_n$ are independent and identically distributed random variables. Denote their mean $\mu$, variance $\sigma^ 2$, skewness $\gamma$ and kurtosis $\kappa$. Let $\bar{X}=\frac{1}{n}\sum_{i=1}^nX_i$, $S^2=\frac{1}{n-1}\sum_{i=1}^n(X_i-\bar{X})^2$ and $\hat{\mu}_{3}=\frac{n}{(n-1)(n-2)}\sum_{i=1}^n(X_i-\bar{X})^3$. Then the following results hold.
\begin{enumerate}
\item[(i)] If $EX^4<\infty$ and $n\geq 2$,
\begin{equation}\label{korrelation}
\rho_2=\rho(\bar{X}, S^2)=\frac{\mu_3}{\sigma^ 3\sqrt{\frac{\mu_4}{\sigma^4}-\frac{n-3}{n-1}}}=\frac{\gamma}{\sqrt{\kappa+3-\frac{n-3}{n-1}}}.
\end{equation}
\item[(ii)] If $EX^6<\infty$ and $n\geq 3$,
\begin{equation}\label{korrelation2}\begin{split}
\rho_3=\rho(\bar{X},\hat{\mu}_3)&=\frac{\mu_4-3\sigma^4}{\sigma^4\sqrt{\frac{\mu_6}{\sigma^6}-3\frac{(2n-5)}{n-1}\frac{\mu_4}{\sigma^4}-\frac{(n-10)}{(n-1)}\frac{\mu_3^ 2}{\sigma^6}+\frac{(9n^2-36n+60)}{(n-1)(n-2)}}}\\
&=\frac{\kappa}{\sqrt{\lambda+9\frac{n}{n-1}(\kappa+\gamma^2)+\frac{6n^2}{(n-1)(n-2)}}},
\end{split}\end{equation}
where $\lambda$ is the sixth standardized cumulant of $X$.
\end{enumerate}
\end{lemma}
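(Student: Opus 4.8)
Since the correlation coefficient is invariant under affine transformations of the $X_i$, I would first normalise to $\mu=0$, $\sigma^2=1$; equivalently it suffices to track $\mu_3,\mu_4,\mu_6$ (equivalently $\varkappa_2,\dots,\varkappa_6$) and $n$. The three statistics are exactly Fisher's $k$-statistics, $\bar X=k_1$, $S^2=k_2$, $\hat\mu_3=k_3$, so the plan is to assemble $\rho_2$ and $\rho_3$ from the classical sampling (co)variances of $k_1,k_2,k_3$, namely
\[
\operatorname{var}(k_1)=\frac{\varkappa_2}{n},\qquad \operatorname{cov}(k_1,k_2)=\frac{\varkappa_3}{n},\qquad \operatorname{var}(k_2)=\frac{\varkappa_4}{n}+\frac{2\varkappa_2^2}{n-1},
\]
\[
\operatorname{cov}(k_1,k_3)=\frac{\varkappa_4}{n},\qquad \operatorname{var}(k_3)=\frac{\varkappa_6}{n}+\frac{9\varkappa_2\varkappa_4}{n-1}+\frac{9\varkappa_3^2}{n-1}+\frac{6n\varkappa_2^3}{(n-1)(n-2)}
\]
(see, e.g., Stuart and Ord on $k$-statistics), and then to rewrite everything in terms of $\gamma,\kappa,\lambda$.

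For (i) I would form $\rho_2=\operatorname{cov}(k_1,k_2)/\sqrt{\operatorname{var}(k_1)\operatorname{var}(k_2)}$; the common factor $1/n$ cancels, leaving
\[
\rho_2=\frac{\varkappa_3}{\varkappa_2^{3/2}\sqrt{\varkappa_4/\varkappa_2^2+2n/(n-1)}}.
\]
Since $\varkappa_3/\varkappa_2^{3/2}=\gamma$, $\varkappa_4/\varkappa_2^2=\kappa=\mu_4/\sigma^4-3$, and the elementary identity $3-\tfrac{n-3}{n-1}=\tfrac{2n}{n-1}$ holds, this is precisely \eqref{korrelation} in both stated forms.

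For (ii) the same manoeuvre works: pulling $1/n$ out of $\operatorname{var}(k_3)$, writing $\sqrt{\operatorname{var}(k_1)}=\sqrt{\varkappa_2}/\sqrt n$, and cancelling the powers of $n$ against the $1/n$ in the numerator gives
\[
\rho_3=\frac{\varkappa_4}{\varkappa_2^{1/2}\sqrt{\varkappa_6+\tfrac{9n}{n-1}(\varkappa_2\varkappa_4+\varkappa_3^2)+\tfrac{6n^2}{(n-1)(n-2)}\varkappa_2^3}}.
\]
Dividing the radicand by $\varkappa_2^3=\sigma^6$ and using $\varkappa_4/\sigma^4=\varkappa_2\varkappa_4/\sigma^6=\kappa$, $\varkappa_3^2/\sigma^6=\gamma^2$, $\varkappa_6/\sigma^6=\lambda$ yields the cumulant form in \eqref{korrelation2}. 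To recover the first (central-moment) form I would substitute $\lambda=\mu_6/\sigma^6-15\kappa-10\gamma^2-15$, $\kappa=\mu_4/\sigma^4-3$, $\gamma^2=\mu_3^2/\sigma^6$ and collect terms: the coefficient of $\mu_4/\sigma^4$ becomes $-3(2n-5)/(n-1)$, that of $\mu_3^2/\sigma^6$ becomes $-(n-10)/(n-1)$, and the remaining constants combine, over the denominator $(n-1)(n-2)$, to $9n^2-36n+60$.

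The one step with real content is the sampling variance of $k_3=\hat\mu_3$; if a self-contained argument is preferred, $\operatorname{var}(\hat\mu_3)$ can be obtained by expanding $\big(\sum_i(X_i-\bar X)^3\big)^2$ into power sums and taking expectations with the $X_i$ independent, but this is a lengthy sixth-order moment computation and the tracking of coincident indices is the genuinely tedious part. By contrast $\operatorname{cov}(\bar X,S^2)$ and $\operatorname{cov}(\bar X,\hat\mu_3)$ drop out quickly from a short expansion once $E\bar X=0$ is used, and $\operatorname{var}(S^2)$ is the familiar fourth-moment formula. I would therefore quote the (co)variance formulas with the appropriate citation and devote the rest of the proof to the two algebraic simplifications above, the only mild subtleties being the identity $3-\tfrac{n-3}{n-1}=\tfrac{2n}{n-1}$ and the collapse of the constant term to $9n^2-36n+60$.
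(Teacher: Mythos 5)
Your proposal is correct and follows essentially the same route as the paper: both rest on the classical sampling (co)variance formulas for $\bar X$, $S^2$ and $\hat\mu_3$ (the paper cites Cram\'{e}r and Fisher for these, which is exactly the $k$-statistics material you invoke) followed by the same algebraic reduction to $\gamma,\kappa,\lambda$, including the identities $3-\tfrac{n-3}{n-1}=\tfrac{2n}{n-1}$ and the collapse of the constants to $9n^2-36n+60$. The only cosmetic difference is that you quote $\operatorname{cov}(k_1,k_3)=\varkappa_4/n$ from the $k$-statistics literature, whereas the paper derives $\operatorname{cov}(\bar X,\hat\mu_3)=(\mu_4-3\sigma^4)/n$ by a short direct expansion.
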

\begin{proof}
The proof amounts to calculating the moments involved:
\begin{enumerate}
\item[(i)] It is well-known that $\mbox{var}(\bar{X})=\sigma^2/n$ and from the results of Section 27.4 of \citet{cr1} it follows that \[
\mbox{var}(S^2)=\mbox{var}\Big{(}\frac{n}{n-1}\frac{1}{n}\sum_{i=1}^n(X_i-\bar{X})^2)\Big{)}=\frac{1}{n}\mu_4-\frac{n-3}{n(n-1)}\sigma^4
\]
and that
\[
\mbox{cov}(\bar{X}, S^2)=\frac{n}{n-1}\frac{n-1}{n^2}\mu_3=\frac{1}{n}\mu_3.
\]
The result follows from the above moments and the fact that $\mu_4=\sigma^4(\kappa+3)$.

\item[(ii)] From \citet{fi1} we have
\[\begin{split}
\mbox{var}(\hat{\mu}_{3})=\frac{1}{n}\lambda\sigma^6+\frac{9(\kappa+\gamma^6)\sigma^6}{n-1}+\frac{6n\sigma^6}{(n-1)(n-2)}.
\end{split}\]
Furthermore,
\[
\mbox{cov}(\bar{X},\hat{\mu}_{3})=E((\bar{X}-\mu)\hat{\mu}_{3})-E((\bar{X}-\mu)\mu_3)=E((\bar{X}-\mu)\hat{\mu}_{3}),
\]
but this expression does not depend on $\mu$, so we can study the case where $\mu=0$ without loss of generality. Then
\[\begin{split}
&\mbox{cov}(\bar{X},\hat{\mu}_{3})=E(\bar{X}\hat{\mu}_{3})=\frac{n^2}{(n-1)(n-2)}E\Big{(}\bar{X}\frac{1}{n}\Big{(}\sum_iX_i^3-3\bar{X}\sum_iX_i^2+3\bar{X}^2\sum_iX_i-\bar{X}^3\Big{)}\Big{)}\\
&=\frac{n^2}{(n-1)(n-2)}\Big{(} E(\bar{X}\frac{1}{n}\sum_iX_i^3) -3E(\bar{X}^2\frac{1}{n}\sum_iX_i^2)+2E(\bar{X}^4)  \Big{)}.
\end{split}\]
The three expectations above are all found in Sections 27.4 and 27.5 of \citet{cr1}. Inserting their values, routine calculations give that the above expressions reduces to
\[\begin{split}
\mbox{cov}(\bar{X},\hat{\mu}_{3})=\frac{\mu_4-3\sigma^4}{n}.
\end{split}\]
Thus
\[\begin{split}
&\rho(\bar{X},\hat{\mu}_3)=\frac{\frac{\mu_4-3\sigma^4}{n}}{\sqrt{\frac{1}{n}\sigma^2}\sqrt{\frac{1}{n}\lambda\sigma^6+\frac{9(\kappa+\gamma^6)\sigma^6}{n-1}+\frac{6n\sigma^6}{(n-1)(n-2)}}}\\
&=\frac{\kappa}{\sqrt{\lambda+9\frac{n}{n-1}(\kappa+\gamma^2)+\frac{6n^2}{(n-1)(n-2)}}}=\frac{\mu_4-3\sigma^4}{\sigma^4\sqrt{\frac{\mu_6}{\sigma^6}-3\frac{(2n-5)}{n-1}\frac{\mu_4}{\sigma^4}-\frac{(n-10)}{(n-1)}\frac{\mu_3^ 2}{\sigma^6}+\frac{(9n^2-36n+60)}{(n-1)(n-2)}}}.
\end{split}\]
\end{enumerate}
\end{proof}

\begin{remark}
\citet{ks1}, Section 31.3, present the asymptotic result that $\rho(\bar{X}, S^2)\rightarrow\frac{\gamma}{\sqrt{\kappa+2}}$.
\end{remark}

The following little-known lemma, relating the standardized cumulants to each other, tells us what the possible values of $(\gamma,\kappa,\lambda)$ are. This allows us to study $\rho_2$ and $\rho_3$ as functions of the standardized cumulants. 

\begin{lemma}\label{otherslemma}
Let $X,X_1, X_2, \ldots$ be independent and identically distributed random variables that satisfy the conditions in Lemma \ref{korrlemma}. Then
\begin{enumerate}
\item[(i)] $\gamma^2\leq\kappa+2$, with equality if and only if $X$ has a two-point distribution.
\item[(ii)] $\kappa^2\leq \lambda+9(\kappa+\gamma^2)+6$, with equality if $X$ has a two-point distribution.
\end{enumerate}
\end{lemma}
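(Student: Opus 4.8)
The plan is to reduce both inequalities to statements about a single random variable and then invoke a Cauchy–Schwarz (or variance-nonnegativity) argument. The key observation is that standardized cumulants can be rewritten in terms of central moments of the standardized variable $Z=(X-\mu)/\sigma$: we have $\gamma=EZ^3$, $\kappa=EZ^4-3$, and, using the cumulant-to-moment relation quoted before the lemma, $\lambda+9(\kappa+\gamma^2)+15=EZ^6$. Since all these quantities are invariant under affine transformations of $X$, I would assume $EZ=0$, $EZ^2=1$ throughout.

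For \textbf{part (i)}, the inequality $\gamma^2\le\kappa+2$ becomes $(EZ^3)^2\le EZ^4-1$. Since $EZ^2=1$, this is $(EZ^3)^2\le EZ^4-(EZ^2)^2=\mathrm{var}(Z^2)$. But $\mathrm{cov}(Z,Z^2)=EZ^3-EZ\cdot EZ^2=EZ^3$, and $\mathrm{var}(Z)=EZ^2-(EZ)^2=1$, so by Cauchy–Schwarz applied to the pair $(Z,Z^2)$ we get $(EZ^3)^2=\mathrm{cov}(Z,Z^2)^2\le\mathrm{var}(Z)\,\mathrm{var}(Z^2)=\mathrm{var}(Z^2)=EZ^4-1$, which is exactly the claim. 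Equality in Cauchy–Schwarz holds iff $Z^2$ is an affine function of $Z$ almost surely, i.e. $Z$ satisfies a quadratic equation $Z^2=aZ+b$ a.s., which forces $Z$ (hence $X$) to be supported on at most two points; conversely any genuine two-point distribution attains equality. (One should note the degenerate one-point case is excluded by $\sigma^2>0$.)

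For \textbf{part (ii)}, the inequality $\kappa^2\le\lambda+9(\kappa+\gamma^2)+6$ rewrites, via $EZ^6=\lambda+9(\kappa+\gamma^2)+15$ and $\kappa=EZ^4-3$, as $(EZ^4-3)^2\le EZ^6-9$, i.e. $(EZ^4)^2-6EZ^4+9\le EZ^6-9$, i.e. $(EZ^4)^2\le EZ^6+6EZ^4-18$. Using $EZ^2=1$ we can try to engineer this as a variance/covariance inequality for the pair $(Z,Z^3)$ or for $(Z^2,$ something$)$; the natural candidate is Cauchy–Schwarz on $(Z,Z^3)$: $\mathrm{cov}(Z,Z^3)^2\le\mathrm{var}(Z)\,\mathrm{var}(Z^3)$ gives $(EZ^4-EZ\cdot EZ^3)^2\le(EZ^6-(EZ^3)^2)$, i.e. $(EZ^4)^2\le EZ^6-\gamma^2$, which is stronger than needed on the left but has the wrong constant. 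A cleaner route is to apply Cauchy–Schwarz to $(Z^2-1)$ and $(Z)$ won't help; instead consider the nonnegativity of $\mathrm{var}(Z^3+cZ)$ or, better, work directly with the $2\times2$ Gram matrix of $\{Z, Z^3\}$ centered appropriately. I would look for the right linear combination $aZ+bZ^2+cZ^3$ whose variance being nonnegative yields precisely the stated inequality — equivalently, show the $3\times3$ moment matrix of $(Z,Z^2,Z^3)$ is positive semidefinite and extract the relevant $2\times2$ minor or a suitable quadratic-form evaluation.

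The \textbf{main obstacle} is part (ii): identifying the exact quadratic form (i.e., which vector to plug into the positive-semidefiniteness of the moment matrix of $(1,Z,Z^2,Z^3)$) that reproduces the constants $9$ and $6$ rather than some weaker bound, and then verifying the equality case for two-point distributions. I expect that the clean statement is: the Hankel-type matrix $[EZ^{i+j}]_{i,j=0}^{3}$ is positive semidefinite, and the claimed inequality is a specific principal-minor or vector-evaluated consequence; the two-point equality should follow because for a two-point distribution the sequence $1,Z,Z^2,Z^3$ spans only a two-dimensional space, forcing a rank drop. I would finish by checking the two-point case directly with a parametrized two-point law $P(Z=z_1)=p$, $P(Z=z_2)=1-p$ (with $z_1,z_2$ determined by the zero-mean, unit-variance constraints), confirming equality holds and noting that — unlike in (i) — the converse may fail, consistent with the asymmetric ``if'' in the statement.
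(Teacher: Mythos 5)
Your part (i) is complete, correct, and genuinely different from the paper, which does not prove the lemma at all but simply cites Dubkov--Malakhov (1976) and Rohatgi--Sz\'ekely (1989) for (i) and ``expression (13)'' of the former for (ii). Your Cauchy--Schwarz argument on the pair $(Z,Z^2)$, with the equality case read off from affine dependence of $Z^2$ on $Z$, is a clean self-contained substitute for those citations.

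Part (ii) as written is incomplete, but the gap is caused by a single algebra slip and the repair is already sitting in your own text. The identity you use is wrong: from $\lambda=EZ^6-15\kappa-10\gamma^2-15$ one gets $EZ^6=\lambda+15\kappa+10\gamma^2+15$, not $\lambda+9(\kappa+\gamma^2)+15$. Redoing the reduction with the correct identity,
\[
\lambda+9(\kappa+\gamma^2)+6=EZ^6-6\kappa-\gamma^2-9=EZ^6-6EZ^4+9-(EZ^3)^2,
\]
while $\kappa^2=(EZ^4)^2-6EZ^4+9$, so (ii) is exactly the statement $(EZ^4)^2\le EZ^6-(EZ^3)^2$. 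This is precisely the Cauchy--Schwarz inequality $\mathrm{cov}(Z,Z^3)^2\le\mathrm{var}(Z)\,\mathrm{var}(Z^3)$ that you computed and then discarded as ``having the wrong constant''; there is no need to hunt for a three-term quadratic form or a Hankel minor, and the structure is perfectly parallel to part (i). The two-point equality claim then follows at once: if $Z^2=aZ+b$ a.s., then $Z^3=aZ^2+bZ=(a^2+b)Z+ab$ is affine in $Z$, forcing equality in Cauchy--Schwarz. Your suspicion that the converse fails is also correct: equality in (ii) holds iff $Z^3$ is a.s.\ an affine function of $Z$, which can happen on three points (whose standardized values must then sum to zero); for instance the law placing masses $1/8,\,3/4,\,1/8$ on $-2,\,0,\,2$ has mean $0$, variance $1$ and $Z^3=4Z$ a.s., hence $\kappa^2=1=\lambda+9(\kappa+\gamma^2)+6$ --- a counterexample to the conjecture the paper states immediately after the lemma.
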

\noindent
The inequality in (i) was first shown by \citet{dm1}. The entire statement was later shown by \citet{rs1}. (ii) follows from expression (13) in \citet{dm1} when $\gamma^2<\kappa+2$. It is readily verified that equality holds for two-point distributions. We have not found an $X$ distributed on more than two points for which equality holds in (ii), and conjecture that $\kappa^2= \lambda+9(\kappa+\gamma^2)+6$ only if $X$ has a two-point distribution.

%It is easily seen that when $(\gamma,\kappa,\lambda)$ satisfy the inequalities in Lemma \ref{otherslemma}, $\rho_2$ is a real-valued smooth function of $(\gamma,\kappa)$ and that $\rho_3$ is a real-valued smooth function of $(\gamma,\kappa,\lambda)$.

\begin{remark}
Incidentally, Lemma \ref{otherslemma} can be used to verify that $\rho_2$ and $\rho_3$ are bounded by $-1$ and $1$. Since $\kappa+2\geq \gamma^2$ we have $|\gamma|/\sqrt{\kappa+3-\frac{n-3}{n-1}}<|\gamma|/\sqrt{\kappa+2}\leq 1$ and hence $|\rho_2| < 1$, as expected. We see that the correlation coefficient never equals $\pm 1$. Looking at $\rho_3$ we similarly get $|\rho_3|< 1$ since $\kappa^2\leq \lambda+9(\kappa+\gamma^2)+6$. Conversely, the fact that $\rho_2$ and $\rho_3$ must be bounded by $-1$ and $1$ can be used as a partial proof of Lemma \ref{otherslemma}.
\end{remark}

%%%%%%%%%%%%%%%%%%%%%%%%%%%%%%%%%%%%%%%%%%%%%%%%%%%%%%%%%%%%%%%%%%%%

\subsection{Test statistics}\label{estimators}
The moment estimators of the two correlation coefficients are now obtained by replacing the moments by their sample counterparts
\[\begin{split}
\hat{\gamma}&=\frac{\frac{1}{n}\sum_{i=1}^n(x_i-\bar{x})^3}{\Big{(}\frac{1}{n}\sum_{i=1}^n(x_i-\bar{x})^2\Big{)}^{3/2}},\qquad\hat{\kappa}=\frac{\frac{1}{n}\sum_{i=1}^n(x_i-\bar{x})^4}{\Big{(}\frac{1}{n}\sum_{i=1}^n(x_i-\bar{x})^2\Big{)}^{2}}-3,\\
\hat{\lambda}&=\frac{\frac{1}{n}\sum_{i=1}^n(x_i-\bar{x})^6}{\Big{(}\frac{1}{n}\sum_{i=1}^n(x_i-\bar{x})^2\Big{)}^{3}}-15\hat{\kappa}-10\hat{\gamma}^2-15.
\end{split}\]
The estimators are thus defined as
\begin{eqnarray}
&\label{estimator1}Z_2'=\frac{\hat{\gamma}}{\sqrt{\hat{\kappa}+3-\frac{n-3}{n-1}}},\\
&\label{estimator2}Z_3'=\frac{\hat{\kappa}}{\sqrt{\hat{\lambda}+9\frac{n}{n-1}(\hat{\kappa}+\hat{\gamma}^2)+\frac{6n^2}{(n-1)(n-2)}}}.
\end{eqnarray}
From the above equations and Lemma \ref{otherslemma} it is clear that $Z_2'$ in fact is nothing but a smooth function of the sample skewness and sample kurtosis, relating the size of these two quantities. Likewise, $Z_3'$ is a smooth function of the sample kurtosis, skewness and sixth cumulant.

The estimators are clearly scale and location invariant, i.e. independent of $\mu$ and $\sigma$, as $\hat{\gamma}$, $\hat{\kappa}$ and $\hat{\lambda}$ all share that property. They are therefore suitable as test statistics for tests for normality. Furthermore, it follows from the Cram\'{e}r-Slutsky lemma that they are consistent whenever the necessary moments exist. Asymptotic normality can be shown as well, but the convergence is slow and we therefore prefer to obtain the null distribution using Monte Carlo simulation.

\subsection{Testing}\label{testing}
From Lemma \ref{korrlemma} we conclude that $\rho_2$ is large when the underlying distribution has high skewness, and that high kurtosis brings the correlation coefficient closer to 0. When using $Z_2'$ as test statistic for a normality test, we should thus reject the null hypothesis of normality if, when the alternative distribution has positive skewness, $Z_2'$ is unusually large, or if, when the alternative distribution has negative skewness, $Z_2'$ is negative and unusually large. If the sign of the skewness of the alternative is unknown, $|Z_2'|$ can be studied instead.

Similarly, the hypothesis of normality should be rejected if $Z_3'$ is far from 0. If the sign of the kurtosis of the alternative is known, a one-tailed test should be used.

From the consistency of the estimators (\ref{estimator1}) and (\ref{estimator2}) it is clear that $Z_2'$ test is consistent against alternatives with $\gamma\neq 0$ and that the $Z_3'$ test is consistent against alternatives with $\kappa\neq 0$.

An R implementation of the test is found in the \texttt{cornormtest} package, available from the author.

%%%%%%%%%%%%%%%%%%%%%%%%%%%%%%%%%%%%%%%%%%%%%%%%%%%%%%%%%%%%%%%%%%%%
\section{A simulation power study}
\subsection{Study}

To evalute the performance of the tests a simulation power study was performed, where the $|Z_2'|$, $Z_2'$ and $Z_3'$ test were compared to the $|Z_2|$, $Z_2$ and $Z_3$ tests, one-tailed versions of the sample moment tests $\sqrt{b_1}=\hat{\gamma}$ and $b_2=\hat{\kappa}$, the Shapiro--Wilk test $W$ \citep{sha1}, Vasicek's test $K$ \citep{va1} and the Jarque-Bera test $LM$ \citep{jb1}. The latter test has performed poorly in previous comparisons of power, but is nevertheless popular in econometrics. It is of some interest to us since it is based on the sample skewness and kurtosis; the test statistic is $LM=n(\frac{1}{6}\hat{\gamma}^2+\frac{1}{24}\hat{\kappa}^2)$.

The tests were studied for $\chi^2$, Weibull, lognormal, beta, Student's $t$, Laplace, logistic and normal mixture alternatives and were thus compared for both symmetric and asymmetric distributions as well as short-tailed and long-tailed ones. The skewness, kurtosis and limit correlation coefficients of the alternatives are given in Table \ref{tab0}. To estimate their powers against the various alternative distributions at the significance level $\alpha=0\cdot 05$, the tests were applied to 1,000,000 simulated random samples of size $n=20$ and $n=50$ from each distribution. 

\begin{table}[ht]
\begin{center}
\footnotesize{
\caption{Skewness, kurtosis and correlation coefficients for distributions in the study}\label{tab0}

\begin{tabular}{|l| c c c c| }
\hline
Distribution &  $\mathbf{\gamma}$& $\mathbf{\kappa}$ &   $\mathbf{\lim_{n\rightarrow\infty}\rho_2}$ & $\mathbf{\lim_{n\rightarrow\infty}\rho_3}$\\ \hline

Normal & 0 & 0 & 0 & 0\\

$\chi^2(1)$ & 2$\cdot$82 & 12 & 0$\cdot$75 & 0$\cdot$46 \\

Exponential & 2 & 6 & 0$\cdot$71 & 0$\cdot$41 \\

$\chi^2(4)$ & 1$\cdot$41 & 3 & 0$\cdot$63 & 0$\cdot$33 \\

Weib(1/2,1) & 6$\cdot$62 & 84$\cdot$72   & 0$\cdot$71 & 0$\cdot$36 \\

Weib(2,1) & 0$\cdot$63 & 0$\cdot$25  & 0$\cdot$42 & 0$\cdot$07 \\

LN($\sigma=1/4$)  & 0$\cdot$78 & 1$\cdot$10  & 0$\cdot$32 & 0$\cdot$21 \\

LN($\sigma=1/2$)  & 1$\cdot$75 & 5$\cdot$90  & 0$\cdot$53 & 0$\cdot$33 \\

Beta(1/2,1/2)  & 0 & -1$\cdot$5 & 0 & -0$\cdot$95 \\

Uniform & 0 & -1$\cdot$2  & 0 & -0$\cdot$84 \\

Beta(2,2) & 0 & -0$\cdot$86   & 0 & -0$\cdot$59 \\

Beta(3,3) & 0 & -2/3   & 0 & -0$\cdot$43 \\

Beta(1,2) & 0$\cdot$57 & -0$\cdot$6  & 0$\cdot$48 & -0$\cdot$34 \\

Beta(2,3) & 0$\cdot$29&-0$\cdot$64 & 0$\cdot$25 & -0$\cdot$39 \\

Cauchy & - & -  & - & - \\

t(2) & - & -  & - & - \\

t(3) & - & - & - & - \\

t(4) & 0 & -  & - & - \\

t(5) & 0 & 6  & 0 & - \\

t(6) & 0 & 3     & 0 & - \\

Laplace  & 0 & 3    & 0 & 0$\cdot$38 \\

Logistic  & 0 & 1$\cdot$2   & 0 & 0$\cdot$25 \\

\tiny{$\frac{1}{2}$N(0,1)+$\frac{1}{2}$N(1,1)} & 0 & -0$\cdot$08  & 0 & -0$\cdot$03 \\

\tiny{$\frac{1}{2}$N(0,1)+$\frac{1}{2}$N(4,1)} & 0 & -1$\cdot$28 & 0 & -0$\cdot$78 \\ 

\tiny{$\frac{9}{10}$N(0,1)+$\frac{1}{10}$N(4,1)} & 1$\cdot$2 & 1$\cdot$78 & 0$\cdot$62 & 0$\cdot$44 \\
\hline
\end{tabular}
}
\end{center}
\end{table}

\begin{table}[ht]
\begin{center}
\footnotesize{
\caption{Power of tests for normality against some alternatives, $\alpha=0\cdot 05$, $n=20$}\label{tab2}
\begin{tabular}{|l| c c c c c c c c c c c| }
\hline
$\mathbf{n=20}$ &  $\mathbf{W}$&     $\mathbf{K}$ & $\mathbf{LM}$ & $\mathbf{\sqrt{b_1}}$ &$\mathbf{b_2}$ & $\mathbf{|Z_2|}$ &$\mathbf{Z_2}$&$\mathbf{Z_3}$ &$\mathbf{|Z_2'|}$ & $ \mathbf{Z_2'}$ & $\mathbf{Z_3'}$\\ \hline

$\chi^2(1)$ &  0$\cdot$98 & 0$\cdot$99 & 0$\cdot$72 & 0$\cdot$95& 0$\cdot$61 & 0$\cdot$97 & 0$\cdot$98 & 0$\cdot$64 &  0$\cdot$97 & 0$\cdot$98 & 0$\cdot$62 \\

Exponential  & 0$\cdot$84 & 0$\cdot$84 & 0$\cdot$48 & 0$\cdot$81& 0$\cdot$43 & 0$\cdot$82 & 0$\cdot$89 & 0$\cdot$42 &  0$\cdot$82 & 0$\cdot$89 & 0$\cdot$42 \\

$\chi^2(4)$  & 0$\cdot$53 & 0$\cdot$45 & 0$\cdot$29 & 0$\cdot$60& 0$\cdot$27 & 0$\cdot$55 & 0$\cdot$68 & 0$\cdot$26 &  0$\cdot$57 & 0$\cdot$68 & 0$\cdot$26\\  

Weib(1/2,1)   & 1$\cdot$00 & 1$\cdot$00 & 0$\cdot$90 & 0$\cdot$99& 0$\cdot$83 & 1$\cdot$00 & 1$\cdot$00 & 0$\cdot$85 &  1$\cdot$00 & 1$\cdot$00 & 0$\cdot$84 \\

Weib(2,1)   & 0$\cdot$15 & 0$\cdot$13 & 0$\cdot$07& 0$\cdot$23 & 0$\cdot$08 & 0$\cdot$17 & 0$\cdot$27 & 0$\cdot$07 &  0$\cdot$17 & 0$\cdot$27 & 0$\cdot$08 \\

LN($\sigma=1/4$)   & 0$\cdot$19 & 0$\cdot$12 & 0$\cdot$12& 0$\cdot$29 & 0$\cdot$14 & 0$\cdot$20 & 0$\cdot$30 & 0$\cdot$13 &  0$\cdot$21 & 0$\cdot$31 & 0$\cdot$13 \\

LN($\sigma=1/2$)  & 0$\cdot$52 & 0$\cdot$40 & 0$\cdot$33& 0$\cdot$62 & 0$\cdot$33 & 0$\cdot$56 & 0$\cdot$67 & 0$\cdot$31 &  0$\cdot$57 & 0$\cdot$67 & 0$\cdot$32 \\

Beta(1/2,1/2)   & 0$\cdot$72 & 0$\cdot$92 & 0$\cdot$00 & 0$\cdot$02& 0$\cdot$77 & 0$\cdot$13 & 0$\cdot$10 & 0$\cdot$82 &  0$\cdot$12 & 0$\cdot$09 & 0$\cdot$78 \\

Uniform   & 0$\cdot$20 & 0$\cdot$42 & 0$\cdot$00 & 0$\cdot$01 & 0$\cdot$44& 0$\cdot$04 & 0$\cdot$04 & 0$\cdot$51 &  0$\cdot$04 & 0$\cdot$04 & 0$\cdot$46 \\

Beta(2,2)   & 0$\cdot$05 & 0$\cdot$13 & 0$\cdot$00 & 0$\cdot$01 & 0$\cdot$18& 0$\cdot$02 & 0$\cdot$03 & 0$\cdot$21 &  0$\cdot$02 & 0$\cdot$03 & 0$\cdot$18 \\

Beta(3,3)   & 0$\cdot$04 & 0$\cdot$09 & 0$\cdot$00 & 0$\cdot$02 & 0$\cdot$11& 0$\cdot$02 & 0$\cdot$03 & 0$\cdot$13 &  0$\cdot$03 & 0$\cdot$03 & 0$\cdot$11 \\

Beta(1,2)   & 0$\cdot$30 & 0$\cdot$43 & 0$\cdot$03 & 0$\cdot$22 & 0$\cdot$17& 0$\cdot$24 & 0$\cdot$37 & 0$\cdot$18 &  0$\cdot$24 & 0$\cdot$37 & 0$\cdot$16 \\

Beta(2,3) &  0$\cdot$07 & 0$\cdot$12 & 0$\cdot$02 & 0$\cdot$07 & 0$\cdot$13& 0$\cdot$06 & 0$\cdot$11 & 0$\cdot$15 &  0$\cdot$06 & 0$\cdot$11 & 0$\cdot$13 \\

Cauchy   & 0$\cdot$87 & 0$\cdot$74 & 0$\cdot$82 & 0$\cdot$41 & 0$\cdot$88 & 0$\cdot$70 & 0$\cdot$37 & 0$\cdot$90 & 0$\cdot$70 & 0$\cdot$37 & 0$\cdot$89 \\

t(2)   & 0$\cdot$53 & 0$\cdot$31 & 0$\cdot$49 & 0$\cdot$29 & 0$\cdot$59 & 0$\cdot$43 & 0$\cdot$25 & 0$\cdot$61 & 0$\cdot$43 & 0$\cdot$25 & 0$\cdot$61 \\

t(3)   & 0$\cdot$34 & 0$\cdot$16 & 0$\cdot$31 & 0$\cdot$21 &0$\cdot$40& 0$\cdot$29 & 0$\cdot$19 & 0$\cdot$42 &  0$\cdot$29 & 0$\cdot$18 & 0$\cdot$42\\

t(4)   & 0$\cdot$24 & 0$\cdot$10 & 0$\cdot$22 & 0$\cdot$17 & 0$\cdot$30& 0$\cdot$21 & 0$\cdot$15 & 0$\cdot$31 & 0$\cdot$21 & 0$\cdot$15 & 0$\cdot$31 \\

t(5)  & 0$\cdot$19 & 0$\cdot$07 & 0$\cdot$17 & 0$\cdot$14 & 0$\cdot$24& 0$\cdot$17 & 0$\cdot$12 & 0$\cdot$25 & 0$\cdot$17 & 0$\cdot$12 & 0$\cdot$25 \\

t(6)  & 0$\cdot$15 & 0$\cdot$06 & 0$\cdot$13 & 0$\cdot$13 & 0$\cdot$20& 0$\cdot$14 & 0$\cdot$11 & 0$\cdot$20 & 0$\cdot$14 & 0$\cdot$11 & 0$\cdot$21 \\

Laplace   & 0$\cdot$26 & 0$\cdot$09 & 0$\cdot$22 & 0$\cdot$17 & 0$\cdot$33& 0$\cdot$20 & 0$\cdot$14 & 0$\cdot$36 & 0$\cdot$20& 0$\cdot$14 & 0$\cdot$35 \\

Logistic   & 0$\cdot$12 & 0$\cdot$05 & 0$\cdot$10 & 0$\cdot$10& 0$\cdot$16& 0$\cdot$11 & 0$\cdot$09 & 0$\cdot$16 & 0$\cdot$11 & 0$\cdot$09 & 0$\cdot$16 \\

\tiny{$\frac{1}{2}$N(0,1)+$\frac{1}{2}$N(1,1)}  & 0$\cdot$05 & 0$\cdot$05 & 0$\cdot$02& 0$\cdot$05 & 0$\cdot$05 & 0$\cdot$05 & 0$\cdot$05 & 0$\cdot$06 &  0$\cdot$04 & 0$\cdot$05 & 0$\cdot$05 \\

\tiny{$\frac{1}{2}$N(0,1)+$\frac{1}{2}$N(4,1)}  & 0$\cdot$40 & 0$\cdot$55 & 0$\cdot$00& 0$\cdot$02 & 0$\cdot$61 & 0$\cdot$09 & 0$\cdot$08 & 0$\cdot$58 &  0$\cdot$09 & 0$\cdot$08 & 0$\cdot$55 \\

\tiny{$\frac{9}{10}$N(0,1)+$\frac{1}{10}$N(4,1)}  & 0$\cdot$53 & 0$\cdot$27& 0$\cdot$35 & 0$\cdot$65 & 0$\cdot$38 & 0$\cdot$53 & 0$\cdot$64 & 0$\cdot$42 &  0$\cdot$53 & 0$\cdot$64 & 0$\cdot$40 \\
\hline
\end{tabular}
}
\end{center}
\end{table}

\begin{table}[ht]
\begin{center}
\footnotesize{
\caption{Power of tests for normality against some alternatives, $\alpha=0\cdot 05$, $n=50$}\label{tab3}

\begin{tabular}{|l| c c c c c c c c c c c| }
\hline
$\mathbf{n=50}$ &  $\mathbf{W}$&     $\mathbf{K}$ & $\mathbf{LM}$ & $\mathbf{\sqrt{b_1}}$ &$\mathbf{b_2}$ & $\mathbf{|Z_2|}$ &$\mathbf{Z_2}$&$\mathbf{Z_3}$ &$\mathbf{|Z_2'|}$ & $ \mathbf{Z_2'}$ & $\mathbf{Z_3'}$\\ \hline

$\chi^2(1)$ &  1$\cdot$00 & 1$\cdot$00 & 1$\cdot$00 & 1$\cdot$00& 0$\cdot$91 & 1$\cdot$00 & 1$\cdot$00 & 0$\cdot$92 & 1$\cdot$00 & 1$\cdot$00 & 0$\cdot$92 \\

Exponential  & 1$\cdot$00 & 1$\cdot$00 & 0$\cdot$95 & 1$\cdot$00 & 0$\cdot$73& 1$\cdot$00 & 1$\cdot$00 & 0$\cdot$73 &  1$\cdot$00 & 1$\cdot$00 & 0$\cdot$73 \\

$\chi^2(4)$  & 0$\cdot$95 & 0$\cdot$91 & 0$\cdot$76 & 0$\cdot$95 & 0$\cdot$50& 0$\cdot$95 & 0$\cdot$97 & 0$\cdot$48 &  0$\cdot$95 & 0$\cdot$98 & 0$\cdot$49 \\  

Weib(1/2,1)   & 1$\cdot$00 & 1$\cdot$00 & 1$\cdot$00 & 1$\cdot$00& 0$\cdot$99 & 1$\cdot$00 & 1$\cdot$00 & 0$\cdot$99 &  1$\cdot$00 & 1$\cdot$00 & 0$\cdot$99 \\

Weib(2,1)   & 0$\cdot$41 & 0$\cdot$32 & 0$\cdot$21 & 0$\cdot$52& 0$\cdot$12 & 0$\cdot$45 & 0$\cdot$58 & 0$\cdot$10 &   0$\cdot$46 & 0$\cdot$60 & 0$\cdot$10 \\

LN($\sigma=1/4$)   & 0$\cdot$44 & 0$\cdot$25 & 0$\cdot$34& 0$\cdot$59 & 0$\cdot$24 & 0$\cdot$49 & 0$\cdot$61 & 0$\cdot$22 &  0$\cdot$51 & 0$\cdot$62 & 0$\cdot$23 \\

LN($\sigma=1/2$)  & 0$\cdot$92 & 0$\cdot$83 & 0$\cdot$80& 0$\cdot$95 & 0$\cdot$60 & 0$\cdot$94 & 0$\cdot$97 & 0$\cdot$59 &   0$\cdot$94 & 0$\cdot$97 & 0$\cdot$59 \\

Beta(1/2,1/2)   & 1$\cdot$00 & 1$\cdot$00 & 0$\cdot$03 & 0$\cdot$01& 1$\cdot$00 & 0$\cdot$14 & 0$\cdot$10 & 1$\cdot$00 &  0$\cdot$13 & 0$\cdot$10 & 1$\cdot$00 \\

Uniform   & 0$\cdot$75 & 0$\cdot$92 & 0$\cdot$00 & 0$\cdot$01& 0$\cdot$94 & 0$\cdot$04 & 0$\cdot$04 & 0$\cdot$96 &  0$\cdot$04 & 0$\cdot$04 & 0$\cdot$96 \\

Beta(2,2)   & 0$\cdot$15 & 0$\cdot$31 & 0$\cdot$00 & 0$\cdot$01 & 0$\cdot$52& 0$\cdot$02 & 0$\cdot$02 & 0$\cdot$55 &  0$\cdot$02 & 0$\cdot$02 & 0$\cdot$55 \\

Beta(3,3)   & 0$\cdot$07 & 0$\cdot$15 & 0$\cdot$00& 0$\cdot$01 & 0$\cdot$28& 0$\cdot$02 & 0$\cdot$02 & 0$\cdot$30 &  0$\cdot$02 & 0$\cdot$03 & 0$\cdot$30 \\

Beta(1,2)   & 0$\cdot$84 & 0$\cdot$91 & 0$\cdot$11 & 0$\cdot$52 & 0$\cdot$31& 0$\cdot$61 & 0$\cdot$74 & 0$\cdot$28 &  0$\cdot$63 & 0$\cdot$75 & 0$\cdot$28 \\

Beta(2,3) &  0$\cdot$20 & 0$\cdot$29 & 0$\cdot$01 & 0$\cdot$13 & 0$\cdot$30& 0$\cdot$12 & 0$\cdot$21 & 0$\cdot$31 &  0$\cdot$13 & 0$\cdot$23 & 0$\cdot$31 \\

Cauchy  & 1$\cdot$00 & 0$\cdot$99 & 0$\cdot$99 & 0$\cdot$46 & 1$\cdot$00 & 0$\cdot$82 & 0$\cdot$42 & 1$\cdot$00 &  0$\cdot$81& 0$\cdot$42 & 1$\cdot$00 \\

t(2)   & 0$\cdot$86 & 0$\cdot$68 & 0$\cdot$87 & 0$\cdot$38 & 0$\cdot$90 & 0$\cdot$58 & 0$\cdot$32 & 0$\cdot$92 &  0$\cdot$58& 0$\cdot$32 & 0$\cdot$92 \\

t(3)   & 0$\cdot$64 & 0$\cdot$37 & 0$\cdot$67 & 0$\cdot$30 & 0$\cdot$73 & 0$\cdot$41 & 0$\cdot$24 & 0$\cdot$75 &  0$\cdot$41& 0$\cdot$25 & 0$\cdot$74 \\

t(4)   & 0$\cdot$47 & 0$\cdot$21 & 0$\cdot$50 & 0$\cdot$24 & 0$\cdot$57 & 0$\cdot$30 & 0$\cdot$19 & 0$\cdot$59 &  0$\cdot$31& 0$\cdot$19 & 0$\cdot$59 \\

t(5)   & 0$\cdot$36 & 0$\cdot$13 & 0$\cdot$39 & 0$\cdot$20 & 0$\cdot$46 & 0$\cdot$24 & 0$\cdot$16 & 0$\cdot$47 &  0$\cdot$24& 0$\cdot$16 & 0$\cdot$47 \\

t(6)   & 0$\cdot$28 & 0$\cdot$10 & 0$\cdot$32 & 0$\cdot$17 & 0$\cdot$38 & 0$\cdot$19 & 0$\cdot$14 & 0$\cdot$39 &  0$\cdot$20& 0$\cdot$14 & 0$\cdot$39 \\

Laplace   & 0$\cdot$52 & 0$\cdot$26 & 0$\cdot$51 & 0$\cdot$22 & 0$\cdot$61& 0$\cdot$24 & 0$\cdot$16 & 0$\cdot$67 &  0$\cdot$25 & 0$\cdot$17 & 0$\cdot$66 \\

Logistic   & 0$\cdot$20 & 0$\cdot$06 & 0$\cdot$22 & 0$\cdot$14& 0$\cdot$28 & 0$\cdot$14 & 0$\cdot$11 & 0$\cdot$29 &  0$\cdot$13 & 0$\cdot$11 & 0$\cdot$29 \\

\tiny{$\frac{1}{2}$N(0,1)+$\frac{1}{2}$N(1,1)}  & 0$\cdot$05 & 0$\cdot$05 & 0$\cdot$03& 0$\cdot$05 & 0$\cdot$06 & 0$\cdot$04 & 0$\cdot$04 & 0$\cdot$06 &  0$\cdot$04 & 0$\cdot$05 & 0$\cdot$06 \\

\tiny{$\frac{1}{2}$N(0,1)+$\frac{1}{2}$N(4,1)}  & 0$\cdot$90 & 0$\cdot$92 & 0$\cdot$00& 0$\cdot$01 & 0$\cdot$96 & 0$\cdot$09 & 0$\cdot$07 & 0$\cdot$94 &  0$\cdot$08 & 0$\cdot$08 & 0$\cdot$94 \\

\tiny{$\frac{9}{10}$N(0,1)+$\frac{1}{10}$N(4,1)}  & 0$\cdot$91 & 0$\cdot$72& 0$\cdot$87 & 0$\cdot$95 & 0$\cdot$65 & 0$\cdot$90 & 0$\cdot$94 & 0$\cdot$74 &  0$\cdot$89 & 0$\cdot$94 & 0$\cdot$73 \\
\hline
\end{tabular}
}
\end{center}
\end{table}

It should be noted that the Student's $t$ distributions considered in the study don't satisfy the conditions of Lemma \ref{korrlemma}, rendering $\rho_2$ and $\rho_3$ meaningless. This is discussed further in the results section below.

The simulations were carried out in R, using \texttt{shapiro.test} in the \texttt{stats} package for the Shapiro--Wilk test and \texttt{jarque.bera.test} in the \texttt{tseries} package for the Jarque-Bera test. For Vasicek's test the critical values given in \citet{va1} were used. Critical values for $\sqrt{b_1}$, $b_2$, $|Z_2|$, $Z_2$, $Z_3$, $|Z_2'|$, $Z_2'$ and $Z_3'$ were estimated using 10,000 simulated normal samples for each $n$.

%%%%%%%%%%%%%%%%%%%%%%%%%%%%%%%%%%%%%%%%%%%%%%%%%%%%%%%%%%%%%%%%%%%%
\subsection{Results}
The $Z_2$, $Z_3$, $Z_2'$ and $Z_3'$ tests all performed very well in the study. The results are presented in Tables \ref{tab2}-\ref{tab3} below. There was little difference between the performance of the $Z_2$ and $Z_2'$ tests and between the $Z_3$ and $Z_3'$ tests. The former is interesting, since the $Z_2$ test statistic is an estimator of $\rho(\bar{X},(S^2)^{1/3})$ while $Z_2'$ is an estimator of $\rho(\bar{X},S^2)$.

Judging from the simulation results, we make the recommendations that follow below. Naturally, these are limited to the tests considered in the study. It should however be noted that the Shapiro--Wilk, Vasicek, $\sqrt{b_1}$ and $b_2$ tests have displayed good performance compared to other tests for normality in previous power studies.

For asymmetric alternatives either the $Z_2$ or the $Z_2'$ test should be used. They had the highest power against most asymmetric alternatives in the study, and power close to that of the best test whenever they didn't have the highest power. The one-sided tests are particularly powerful, but the two-sided tests are often more powerful than the competing tests.

For symmetric alternatives either the $Z_3$ or the $Z_3'$ test can be recommended, both for platykurtic ($\kappa<0$) and leptokurtic ($\kappa>0$) distributions. Vasicek's test and the $b_2$ test are more powerful against some alternatives and should be considered to be interesting alternatives to the $Z_3$ or the $Z_3'$ tests. It would be of some interest to compare these tests in a larger power study.

As for the Student's $t$ distributions studied, we note that $\rho_2$ is undefined for the distributions with 4 or fewer degrees of freedom and that $\rho_3$ is undefined for all six distributions. Nevertheless, both tests perform quite well against those alternatives. This is perhaps not unexpected, since the heavy tails of those distributions will cause observations that are so large that they dominate $\sum_i(x_i-\bar{x})^k$ completely. Such observations force $Z_2'$ to be close to either -1 or 1 and $Z_3'$ to be close to 1.

%%%%%%%%%%%%%%%%%%%%%%%%%%%%%%%%%%%%%%%%%%%%%%%%%%%%%%%%%%%%%%%%%%%%
\subsection{Concluding remarks}
In many situations of practical interest the practitioner has some idea about the type of non-normality that can occur; ideas about the sign of the skewness of the alternative and whether or not is has long or short tails. Similarly, it might be of interest to guard against some special class of alternatives. For instance, leptokurtic alternatives with $\kappa>0$ are often considered to be a greater problem than platykurtic alternatives with $\kappa<0$. Judging from the simulation results presented here, the one-tailed $Z_2'$ and $Z_3'$ tests can be recommended above some of the most common tests for normality in such cases.

The good performance of the $Z$ and $Z'$ tests and the fact that the jackknife approach yields tests with essentially the same power as the ''exact'' tests is encouraging. Jackknifing or bootstraping to estimate correlations, or other quantities, could perhaps be used for other independence characterizations as well, as mentioned in \citet{wm1}. \citet{br1} studied some sub- and resampling based tests based on independence characterizations and noted that the bootstrap and jackknife tests seemed to complement each other.

\end{document}